\theoremstyle{definition} 
\newtheorem{df}{Definition}[section] 
\theoremstyle{plain}            
\newtheorem{pro}[df]{Proposition}
\newtheorem{lem}[df]{Lemma}
\newtheorem{theo}[df]{Theorem}
\newtheorem{cor}[df]{Corollary}
\newcommand{\al}{\ensuremath{\alpha}}
\newcommand{\la}{\ensuremath{\lambda}}
\newcommand{\Bcal}{\ensuremath{\mathcal{B}}}
\newcommand{\Ecal}{\ensuremath{\mathcal{E}}}
\newcommand{\Fcal}{\ensuremath{\mathcal{F}}}
\newcommand{\Hcal}{\ensuremath{\mathcal{H}}}
\newcommand{\Mcal}{\ensuremath{\mathcal{M}}}
\newcommand{\Ncal}{\ensuremath{\mathcal{N}}}
\newcommand{\nn}{\ensuremath{\mathbb{N}}}
\newcommand{\rr}{\ensuremath{\mathbb{R}}}
\newcommand{\unit}{\ensuremath{\mathbf{1}}}
\newcommand{\norm}[1]{\ensuremath{\left\|#1\right\|}}
\DeclareMathOperator*{\slim}{s-lim} %strong operator limit
\newcommand{\set}[2]{\left\{#1\,\middle|\,#2 \right\}}
\newcommand{\Ca}{$C${\rm*}-algebra}      
\newcommand{\Csa}{$C${\rm*}-subalgebra}
\newcommand{\vNa}{von Neumann algebra}
\newcommand{\AW}{$AW${\rm*}-algebra}
\newcommand{\Jbw}{JBW-algebra}
\newcommand{\Sam}{{\rm*}-automorphism}
\newcommand{\Sic}{{\rm*}-isomorphic}
\newcommand{\ifff}{if and only if}
\begin{document}

\begin{center}
{\Large \bf Vigier's theorem for the spectral order and its applications}\\

{\large Martin Bohata\footnote{bohata@math.feld.cvut.cz}\\}
         \it Department of Mathematics, Faculty of Electrical Engineering,\\
        Czech Technical University in Prague, Technick\'a 2,\\ 
        166 27 Prague 6, Czech Republic        
\end{center}

{\small \textbf{Abstract:} The paper mainly deals with suprema and infima of self-adjoint operators in a \vNa{} \Mcal{} with respect to the spectral order. Let $\Mcal_{sa}$ be the self-adjoint part of \Mcal{} and let $\preceq$ be the spectral order on $\Mcal_{sa}$. We show that a decreasing net in $(\Mcal_{sa},\preceq)$ with a lower bound has the infimum equal to the strong operator limit. The similar statement is proved for an increasing net bounded above in $(\Mcal_{sa},\preceq)$. This version of Vigier's theorem for the spectral order is used to describe suprema and infima of nonempty bounded sets of self-adjoint operators in terms of the strong operator limit and operator means. As an application of our results on suprema and infima, we study the order topology on $\Mcal_{sa}$ with respect to the spectral order. We show that it is finer than the restriction of the Mackey topology.} 

{\small \textbf{AMS Mathematics Subject Classification:} 46L10; 06F30; 06A06} 

%%%%%%%%%%%%%%%%%%%%%%%%%%%%%%%%%%%%%%%%%%%%%%%%%%%%%%%%%%%%%%%%%%%%%%%%
%%%%%%%%%%%%%%%%%%%%%%%%%%%%%%%%%%%%%%%%%%%%%%%%%%%%%%%%%%%%%%%%%%%%%%%%
\section{Introduction}

Let \Mcal{} be a von Neumann algebra (i.e. strongly operator closed \Csa{} of the \Ca{} \Bcal(\Hcal) of all bounded operators on a complex Hilbert space \Hcal). Denote by $\Mcal_{sa}$ the self-adjoint part of \Mcal{} and consider the poset $(\Mcal_{sa},\leq)$, where $\leq$ is the standard order given by the positive cone $\Mcal_{+}$ of \Mcal{}. A well known result of Sherman \cite{Sh51} shows that $(\Mcal_{sa},\leq)$ is a lattice \ifff{} \Mcal{} is abelian. In the strongly noncommutative case, $(\Mcal_{sa},\leq)$ is far from being a lattice. More concretely, it was proved by Kadison \cite{Ka51} that \Mcal{} is a factor \ifff{} two self-adjoint elements in \Mcal{} are comparable whenever their infimum exists. 

To obtain a lattice structure, Olson \cite{Ol71} introduced another partial order on $\Mcal_{sa}$ called the spectral order. He defined it in terms of spectral families. Recall that a family $(E_\la)_{\la\in\rr}$ of projections in $\Mcal$ is called a ({\it bounded}) {\it spectral family} (or a {\it bounded resolution of the identity}) if the following conditions hold:
\begin{enumerate}
	\item $E_\la\leq E_\mu$ whenever $\la\leq \mu$.
	\item $E_\la=\inf_{\mu>\la}E_\mu$ for every $\la\in\rr$.
	\item There is a positive real number $\al$ such that $E_\la=0$ when $\la<-\al$ and $E_\la=\unit$ when $\la>\al$, where \unit{} is the unit of \Mcal{}.
\end{enumerate}
By Spectral theorem (see, for example, \cite{Ka97I,Sch12}), there is a bijection between $\Mcal_{sa}$ and the set of all spectral families in $\Mcal$. In the sequel, we shall denote the spectral family of a self-adjoint element $x$ by $(E_\la^x)_{\la\in\rr}$. 
The {\it spectral order} is a partial order $\preceq$ on $\Mcal_{sa}$ defined as follows: $x\preceq y$ if $E_\la^y\leq E_\la^x$ for each $\la\in\rr$. It was proved in \cite{Ol71} that the poset $(\Mcal_{sa},\preceq)$ is a conditionally complete lattice (i.e. a lattice in which every nonempty bounded subset has the infimum and the supremum) and, for each two commuting elements $x,y\in\Mcal_{sa}$, $x\preceq y$ \ifff{} $x\leq y$. Bearing in mind the Sherman result mentioned above, one can immediately observe that $\preceq$ coincides with $\leq$ on $\Mcal_{sa}$ \ifff{} \Mcal{} is abelian.

The spectral order has been intensively studied over the last several decades. It was proved in \cite{Ka79} that if $k\in\nn$ and $x_1,\dots, x_k$ are positive operators, then the supremum of $x_1,\dots, x_k$ with respect to the spectral order is
$$
x_1\vee\dots \vee x_k=\slim_{n\to\infty}\left(x_1^n+\dots +x_k^n\right)^{\frac{1}{n}}
$$
where $\slim$ denotes the strong operator limit. It is obvious that the right-hand side can be expressed as the strong operator limit of the $n$th root of the arithmetic mean of $x_1^n,\dots, x_k^n$. On the other hand, the infimum of two positive invertible operators with respect to the spectral order was described in \cite{An89,FNN98} by using the strong operator limit and the harmonic mean.
More concretely, it was shown that the infimum of two positive invertible operators $x$ and $y$ is
$$
x\wedge y=\slim_{n\to\infty}\left(\frac{x^{-n}+y^{-n}}{2}\right)^{-\frac{1}{n}}.
$$
The formula for the supremum of two positive operators was also obtained in \cite{AW96} as a consequence of a connection between the spectral order majorant of a positive operator and a conditional expectation. Furthermore, there is a close relationship of the spectral order with some other interesting partial orders. It was shown in \cite{FF02} that the complexity order is nothing but the spectral order on positive definite matrices and the spectral order is stronger than entropy order. Moreover, the set $\Ecal(\Mcal)$ of all effects in \Mcal{} (i.e. positive operators in the unit ball of \Mcal{}) endowed with the spectral order forms a complete lattice which was investigated in \cite{dG05}. Bijections on $\Ecal(\Bcal(\Hcal))$ preserving the spectral order were described in \cite{MN16, MS07}. The definition of the spectral order can be naturally extended to different settings going beyond the self-adjoint part of a \vNa{} like unbounded operators \cite{PS12,Tu14}, \AW{}s \cite{HT16,HT17}, and \Jbw{}s \cite{Ha07}. Note that the spectral order on (generally unbounded) positive operators arises in the study of one parametr groups of \Sam{}s of a \vNa{} \cite{Ar74}. The spectral order also plays an important role in physics especially in the topos approach to quantum theory (see, for example, \cite{DD14,Ha11,Wo14}).

The main goal of this paper is to study suprema and infima with respect to the spectral order. In particular, we establish an analogue of Vigier's theorem for the spectral order. Recall that the classical Vigier's theorem says that every decreasing (resp. increasing) net in the poset $(\Mcal_{sa},\leq)$ with a lower (resp. upper) bound has the infimum (resp. supremum) equal to its strong operator limit. We prove that the same is true if we consider the poset $(\Mcal_{sa},\preceq)$ in place of $(\Mcal_{sa},\leq)$. This is a surprising fact because $(\Mcal_{sa},\preceq)$ is far from $(\Mcal_{sa},\leq)$ in the non-commutative case. As a consequence of Vigier's theorem for the spectral order, we obtain a description of infima and suprema of nonempty (not necessarily finite) sets of self-adjoint operators in terms of the strong operator limit and operator means. In particular, we show that if a nonempty set $M$ of positive operators is bounded in $(\Mcal_{sa},\preceq)$ and \Fcal{} is the set of all nonempty finite subsets of $M$, then
$$
	\sup_{x\in M}x=\slim_{F\in\Fcal}\slim_{n\in\nn}\left(\sum_{x\in F}x^n\right)^{\frac{1}{n}}=\slim_{F\in\Fcal}\slim_{n\in\nn}\left(\sum_{x\in F}\frac{x^n}{|F|}\right)^{\frac{1}{n}}.
$$
If, in addition, $M$ has an invertible positive lower bound, then
	$$
	\inf_{x\in M}x=\slim_{F\in\Fcal}\slim_{n\in\nn}\left(\sum_{x\in F}x^{-n}\right)^{-\frac{1}{n}}=\slim_{F\in\Fcal}\slim_{n\in\nn}\left(\sum_{x\in F}\frac{x^{-n}}{|F|}\right)^{-\frac{1}{n}}.
	$$
These formulas for the supremum and the infimum of $M$ in $(\Mcal_{sa},\preceq)$ generalize results from \cite{An89,FNN98,Ka79}.
We also prove that the supremum of mutually orthogonal self-adjoint operators is the sum of their positive parts converging in the strong operator topology. 

The last part of the paper is devoted to the study of the order topology on certain subsets of a \vNa{} endowed with the spectral order. 
Recall that the order topology is defined by using the concept of order convergence introduced by Birkhoff \cite{Bi35}. A net $(x_\al)_{\al\in\Lambda}$ in a poset $(P,\leq)$ is {\it order convergent} to $x\in P$ if there are an increasing net $(y_\al)_{\al\in\Lambda}$ and a decreasing net $(z_\al)_{\al\in\Lambda}$ in $(P,\leq)$ such that $y_\al\leq x_\al\leq z_\al$ for all $\al\in \Lambda$ and $\sup_{\al\in\Lambda}y_\al=\inf_{\al\in\Lambda}z_\al=x$. A set	$M\subseteq P$ is said to be {\it order closed} if no net in $M$ is order convergent to a point in $P\setminus M$. Complements of all order closed sets form a topology $\tau_o(P,\leq)$ on $P$ which is called the {\it order topology}. We say that a topology $\tau$ on $P$ {\it preserves order convergence} if every net order converging to an element $x\in P$ converges to $x$ in $\tau$. It is easy to see that the order topology $\tau_o(P,\leq)$ can be characterized as the finest topology preserving order convergence. 

The order topology on a poset is not Hausdorff in general (see, for example, \cite{FK54}). However, the Hausdorff property holds for order topologies on various subsets of a \vNa{} \Mcal{} endowed with the standard order $\leq$ (given by the positive cone) or the star order. This follows from their close connection with usual locally convex topologies on \Mcal{}. For example, it was proved in \cite{ChHW15} that the order topology $\tau_o(\Mcal_{sa},\leq)$ is finer than the restriction of Mackey topology to $\Mcal_{sa}$. Furthermore, the order topology on $\Mcal{}$ with respect to the star order is finer than $\sigma$-strong* topology \cite{Bo18}. In this paper, we prove that $\tau_o(\Mcal_{sa},\preceq)$ is finer than $\tau_o(\Mcal_{sa},\leq)$ and so it is finer than the restriction of Mackey topology to $\Mcal_{sa}$. This result leads us to a natural question whether or not $\tau_o(\Mcal_{sa},\preceq)$ coincides with $\tau_o(\Mcal_{sa},\leq)$. It is obvious that $\tau_o(\Mcal_{sa},\preceq)=\tau_o(\Mcal_{sa},\leq)$ whenever $\Mcal$ is abelian, because partial orders $\preceq$ and $\leq$ are same in abelian \vNa{}s. In order to give an answer in the non-abelian case, we study restrictions of $\tau_o(\Mcal_{sa},\preceq)$ to some subsets of $\Mcal_{sa}$. In particular, we obtain that $\tau_o(\Mcal_{sa},\preceq)|_{P(\Mcal)}=\tau_o(P(\Mcal),\preceq)$, where $P(\Mcal)$ is the set of all projections in \Mcal{}. Since $\tau_o(\Mcal_{sa},\leq)|_{P(\Mcal)}=\tau_o(P(\Mcal),\leq)$ is true only for abelian \vNa{}s \cite{ChHW15}, we see that $\tau_o(\Mcal_{sa},\preceq)$ and $\tau_o(\Mcal_{sa},\leq)$ coincide \ifff{} $\Mcal$ is abelian.
%%%%%%%%%%%%%%%%%%%%%%%%%%%%%%%%%%%%%%%%%%%%%%%%%%%%%%%%%%%%%%%%%%%%%%%%
%%%%%%%%%%%%%%%%%%%%%%%%%%%%%%%%%%%%%%%%%%%%%%%%%%%%%%%%%%%%%%%%%%%%%%%%
\section{Infima and suprema}

Throughout this paper, $\Mcal_{sa}$ is the self-adjoint part of a \vNa{} \Mcal{}, $\Mcal_+$ is the positive part of \Mcal{}, $B_1(\Mcal_{sa})$ is the unit ball of $\Mcal_{sa}$, $\Ecal(\Mcal)=\Mcal_{+}\cap B_1(\Mcal_{sa})$ is the set of all effects in \Mcal{}, and $P(\Mcal)$ is the set of all projections in \Mcal{}.

In the following proposition, we summarize some well known results on the spectral order which will be useful in the sequel. 

\begin{pro}[\cite{FK71,dG05,Ol71}]\label{Well known results}
	Let \Mcal{} be a \vNa{}. Assume that $x,y\in \Mcal_{sa}$, $\al,\beta\in\rr$, and $\al>0$.
		\begin{enumerate}
			\item If $x\preceq y$, then $x\leq y$.
			\item $x\preceq y$ \ifff{} $f(x)\leq f(y)$ for any continuous increasing function $f:\rr\to\rr$.
			\item If $M\subseteq\Mcal_{sa}$ is nonempty and bounded above, then the spectral family of the supremum of $M$ in $(\Mcal_{sa},\preceq)$ is $(\inf_{x\in M} E^{x}_\la)_{\la\in\rr}$, where the infimum of projections is considered in the projection lattice $(P(\Mcal),\leq)$.
			\item If $M\subseteq\Mcal_{sa}$ is nonempty and bounded below, then the spectral family of the infimum of $M$ in $(\Mcal_{sa},\preceq)$ is $(\inf_{\mu>\la}\sup_{x\in M} E^{x}_\mu)_{\la\in\rr}$, where the supremum and the infimum of projections is considered in the projection lattice $(P(\Mcal),\leq)$.
			\item $(P(\Mcal),\preceq)=(P(\Mcal),\leq)$.
			\item $x\preceq y$ \ifff{} $\al x+\beta\unit\preceq\al y+\beta\unit$.
		\end{enumerate}
\end{pro}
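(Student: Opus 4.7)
The plan is to prove the six items in an order that allows later ones to invoke earlier ones. The unifying tool is the Spectral theorem, which identifies $\Mcal_{sa}$ with the set of bounded spectral families in $\Mcal$, combined with the functional-calculus formula: for a continuous increasing $f:\rr\to\rr$, one has $E^{f(x)}_\la = E^x_{g(\la)}$, where $g(\la) = \sup\{t\in\rr : f(t)\le \la\}$.

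The first item to tackle is (ii), which unlocks most of the rest. For the forward direction, the pull-back formula above immediately turns $E^y_\la\le E^x_\la$ into $E^{f(y)}_\la\le E^{f(x)}_\la$, so $f(x)\preceq f(y)$; an appeal to (i) (or a direct comparison via the integral representation) then upgrades this to $f(x)\le f(y)$. For the converse, the idea is to probe the spectral families by approximating the indicator $\chi_{(\la,\infty)}$ from below by an increasing sequence of continuous increasing functions $f_n$; then $f_n(x)\uparrow \unit - E^x_\la$ in the strong operator topology, and the hypothesis $f_n(x)\le f_n(y)$ passes to the limit to give $\unit - E^x_\la\le \unit - E^y_\la$, i.e.\ $E^y_\la\le E^x_\la$. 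With (ii) established, (i) follows by taking $f=\mathrm{id}$, and (vi) follows by composing: for every continuous increasing $g$, the function $h(t)=g(\al t+\beta)$ is continuous and increasing, so applying (ii) in both directions yields the equivalence. Item (v) is checked directly from the explicit spectral family of a projection $p$, namely $E^p_\la=0$ for $\la<0$, $E^p_\la=\unit-p$ for $0\le\la<1$, and $E^p_\la=\unit$ for $\la\ge 1$; plugging in $\la=1/2$ reduces $p\preceq q$ to $p\le q$, and the converse is equally immediate.

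Items (iii) and (iv) are existence-and-description statements and are the most substantial. For (iii), the first task is to verify that $(\inf_{x\in M} E^x_\la)_{\la\in\rr}$ is a bounded spectral family: monotonicity and boundedness are immediate from the bound on $M$, and right-continuity follows from the interchange of infima $\inf_{\mu>\la}\inf_{x\in M} E^x_\mu=\inf_{x\in M}\inf_{\mu>\la} E^x_\mu=\inf_{x\in M} E^x_\la$. Let $s$ denote the operator with this spectral family; then $E^s_\la\le E^x_\la$ for each $x\in M$ gives $x\preceq s$, and for any upper bound $y$ one has $E^y_\la\le E^x_\la$ for all $x\in M$, hence $E^y_\la\le E^s_\la$, i.e.\ $s\preceq y$. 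Item (iv) is parallel, but with the subtlety that $\sup_{x\in M} E^x_\la$ need not be right-continuous in $\la$, so the outer $\inf_{\mu>\la}$ is inserted to enforce right-continuity while preserving the universal property.

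The main obstacle, as I see it, is completing (iv): one must check that this right-continuity fix does not spoil the greatest-lower-bound property. For any lower bound $z$ of $M$, one has $E^x_\la\le E^z_\la$ for every $x\in M$, hence $\sup_{x\in M} E^x_\mu\le E^z_\mu$ for every $\mu$; taking $\inf_{\mu>\la}$ and invoking the right-continuity of the spectral family of $z$ yields $\inf_{\mu>\la}\sup_{x\in M} E^x_\mu\le \inf_{\mu>\la} E^z_\mu=E^z_\la$, which is exactly $z\preceq i$ for the operator $i$ whose spectral family is the prescribed one. This closes the argument.
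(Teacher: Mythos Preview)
The paper does not supply its own proof of this proposition: it is stated as a compilation of well-known facts with citations to \cite{FK71,dG05,Ol71}, and no argument is given. Your sketch is therefore not being compared against anything in the paper, but it does reproduce the standard arguments found in those references, and the treatment of (iii) and (iv)---in particular the verification that the outer $\inf_{\mu>\la}$ in (iv) restores right-continuity without destroying the greatest-lower-bound property---is correct.

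One organisational issue deserves mention. As written, your proof of the forward direction of (ii) ends with ``an appeal to (i) \ldots then upgrades this to $f(x)\le f(y)$,'' and later you derive (i) from (ii) by taking $f=\mathrm{id}$. That is circular. The parenthetical escape you offer (``a direct comparison via the integral representation'') is exactly the right fix, but it should be promoted to the main line: prove (i) first, directly, via the stochastic-dominance observation that for every unit vector $\xi$ the distribution function $\la\mapsto\langle\xi,E^x_\la\xi\rangle$ dominates that of $y$, whence $\langle\xi,x\xi\rangle\le\langle\xi,y\xi\rangle$. With (i) in hand, the forward implication of (ii) follows cleanly from your pull-back formula $E^{f(x)}_\la=E^x_{g(\la)}$ (valid on the range where $g(\la)=\sup\{t:f(t)\le\la\}$ is finite; the remaining cases are trivial), and the rest of your plan goes through unchanged.
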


\begin{pro}\label{suprema and infima in various posets}
Let \Mcal{} be a \vNa{}.
	\begin{enumerate}
		\item If $M\subseteq \Mcal_+$ is nonempty and bounded above in $(\Mcal_{sa},\preceq)$, then the supremum of $M$ in $(\Mcal_{sa},\preceq)$ is a positive element. 
		\item If $M\subseteq \Mcal_+$ is nonempty, then the infimum of $M$ in $(\Mcal_{sa},\preceq)$ is a positive element.
		\item If $L\in\left\{B_1(\Mcal_{sa}),\Ecal(\Mcal),P(\Mcal)\right\}$, then the supremum and the infimum of every nonempty subset of $L$ in $(\Mcal_{sa},\preceq)$ belong to $L$.
	\end{enumerate}
\end{pro}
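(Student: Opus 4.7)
My plan is to handle the three items in order, with each reducing to routine bookkeeping with the spectral family formulas from Proposition \ref{Well known results}.

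For (i), I would argue directly. Fix any $x\in M$. Since $x\preceq\sup M$ in $(\Mcal_{sa},\preceq)$, item (i) of Proposition \ref{Well known results} yields $x\leq\sup M$. Combined with $0\leq x$ this forces $\sup M\in\Mcal_+$. No spectral-theoretic computation is needed.

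For (ii), I would first verify that the infimum exists: the zero operator commutes with every $x\in M$, and commuting pairs agree under $\leq$ and $\preceq$, so $0\preceq x$ for each $x\in M$; hence $M$ is bounded below in $(\Mcal_{sa},\preceq)$ and the infimum exists by conditional completeness. Positivity then follows from item (iv) of Proposition \ref{Well known results}: each $x\in M$ satisfies $E^x_\la=0$ for $\la<0$, so $\sup_{x\in M}E^x_\mu=0$ for $\mu<0$, whence $\inf_{\mu>\la}\sup_{x\in M}E^x_\mu=0$ for $\la<0$. Thus the spectral family of $\inf M$ is concentrated on $[0,\infty)$, which means $\inf M\geq 0$.

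For (iii), I would treat the three sets separately. The case $L=B_1(\Mcal_{sa})$ is handled by observing that $\pm\unit$ commute with every $x\in L$ and bound it in $\leq$, so $-\unit\preceq x\preceq\unit$; hence $-\unit\preceq\inf M\preceq\sup M\preceq\unit$, and item (i) of Proposition \ref{Well known results} turns these into the corresponding $\leq$-inequalities, placing the result in $B_1(\Mcal_{sa})$. The case $L=\Ecal(\Mcal)$ is immediate by combining the $B_1$ case with parts (i) and (ii), since $\Ecal(\Mcal)=\Mcal_+\cap B_1(\Mcal_{sa})$.

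The most delicate case is $L=P(\Mcal)$, which is also the main obstacle. For this I would appeal to the explicit spectral family of a projection (namely $E^p_\la=0$ for $\la<0$, $E^p_\la=\unit-p$ for $0\leq\la<1$, $E^p_\la=\unit$ for $\la\geq 1$) and plug this into items (iii) and (iv) of Proposition \ref{Well known results}. A piecewise computation shows that $(\inf_{p\in M}E^p_\la)_{\la\in\rr}$ equals $0$ on $(-\infty,0)$, $\unit-q$ on $[0,1)$, and $\unit$ on $[1,\infty)$, where $q$ is the supremum of $M$ in the projection lattice $(P(\Mcal),\leq)$; this is precisely the spectral family of the projection $q$, so $\sup M\in P(\Mcal)$. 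A parallel computation for $(\inf_{\mu>\la}\sup_{p\in M}E^p_\mu)_{\la\in\rr}$ identifies $\inf M$ with the projection-lattice infimum of $M$. The care here lies in verifying right-continuity at the endpoints $\la=0$ and $\la=1$ and in correctly exchanging the suprema/infima of projections with the arithmetic manipulation $\unit-(\,\cdot\,)$; once these are checked, the claim drops out.
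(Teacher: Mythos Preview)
Your argument is correct and follows essentially the same plan as the paper: both use $x\preceq\sup M$ and $0\preceq\inf M$ together with Proposition~\ref{Well known results}(i) for parts (i)--(ii) and the $B_1$/$\Ecal$ cases of (iii), and both compute the spectral families explicitly for the $P(\Mcal)$ case. The only minor differences are that the paper handles positivity in (ii) directly from $0\preceq\inf M$ rather than via the spectral family, and in the projection case it concludes by observing (via \cite[Proposition~5.10]{Sch12}) that the computed spectral family forces the spectrum into $\{0,1\}$, whereas you go one step further and identify the resulting projection as the lattice supremum/infimum in $(P(\Mcal),\leq)$.
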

	\begin{proof}\hfill{}
		\begin{enumerate}
			\item Let $y$ be the supremum of $M$ in $(\Mcal_{sa},\preceq)$. Take $x\in M$. Since $x\preceq y$, $0\leq x\leq y$.
			\item It is easy to see that $0$ is a lower bound of $M$ with respect to the spectral order. If $y$ is the infimum of $M$, then $0\preceq y$. Hence $0\leq y$.
			\item Let $M\subseteq B_1(\Mcal_{sa})$ is nonempty. The set $M$ is bounded below by $-\unit$ and bounded above by \unit{} in $(\Mcal_{sa},\preceq)$. This ensures that there are the supremum of $M$, say $y$, and the infimum of $M$, say $z$, in $(\Mcal_{sa},\preceq)$. Clearly, 
			$$
			-\unit\preceq y\preceq x\preceq z\preceq \unit.
			$$
			Thus $y,z\in B_1(\Mcal_{sa})$.
			
			The proof of the case $L=\Ecal(\Mcal)$ follows easily from what we have proved above.
			
			Finally, assume that $M\subseteq P(\Mcal)$ is nonempty. As $M$ is bounded in $(\Mcal_{sa},\preceq)$ (bounded below by 0 and bounded above by \unit{}), the supremum $y$ and the infimum $z$ of $M$ exist. It is easy to see from Proposition~\ref{Well known results} that
			$$
			E^y_\la=\begin{cases}0,& \la<0;\\ \inf_{p\in M}(\unit-p),& \la\in[0,1);\\ \unit,& \la\geq 1;
			\end{cases}
			$$
			and 
			$$
			E^z_\la=\begin{cases}0,& \la<0;\\ \sup_{p\in M}(1-p),& \la\in[0,1);\\ \unit,& \la\geq 1.
			\end{cases}
			$$
According to \cite[Proposition~5.10]{Sch12}, spectra of $y$ and $z$ are contained in $\{0,1\}$. Thus $y,z$ are projections.
	\end{enumerate}
	\end{proof}

By the symbol $X\sqsubseteq Y$ we denote the fact that $X$ is a conditionally complete sublattice of a conditionally complete lattice $Y$. The previous proposition shows that
\vspace{0.5cm}
$$
\begin{tikzpicture}
	\node at (0,0) {$(P(\Mcal),\preceq)\sqsubseteq(\Ecal(\Mcal),\preceq)$};
	\node at (2.5,0.5) {\rotatebox{45}{$\sqsubseteq$}};
	\node at (4,0.75) {$(B_1(\Mcal_{sa}),\preceq)$};
	\node at (2.5,-0.5) {\rotatebox{-45}{$\sqsubseteq$}};
	\node at (4,-0.75) {$(\Mcal_{+},\preceq)$};
	\node at (5.5,0.5) {\rotatebox{-45}{$\sqsubseteq$}};
	\node at (5.5,-0.5) {\rotatebox{45}{$\sqsubseteq$}};
	\node at (6.5,0) {$(\Mcal_{sa},\preceq)$};
\end{tikzpicture}
$$
Moreover, it is easy to observe that $(B_1(\Mcal_{sa}),\preceq)$ and $(\Ecal(\Mcal),\preceq)$ are complete lattices and $(P(\Mcal),\preceq)$ is a complete sublattice of $(\Ecal(\Mcal),\preceq)$. Because there is no danger of confusion, we shall write $\sup_{x\in M} x$ (resp. $\inf_{x\in M} x$) for the supremum (resp. infimum) of $M$ with respect to spectral order without specifying of a concrete lattice in which we compute suprema and infima. 

\begin{lem}\label{bounded sets}
		A set $M\subseteq \Mcal_{sa}$ is order bounded with respect to the spectral order \ifff{} $M$ is norm bounded.
\end{lem}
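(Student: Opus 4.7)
The plan is to prove the two implications independently, each of them short.

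For the forward direction ``norm bounded $\Rightarrow$ spectrally order bounded'', suppose $\|x\|\leq\alpha$ for every $x\in M$. By the functional calculus, $-\alpha\unit\leq x\leq\alpha\unit$ in the standard order. Because $\pm\alpha\unit$ lies in the centre of $\Mcal$, it commutes with every $x\in\Mcal_{sa}$, and for commuting self-adjoint elements the spectral order agrees with the standard order (as recalled in the introduction from Olson's paper). Hence $-\alpha\unit\preceq x\preceq\alpha\unit$ for every $x\in M$, so $M$ is bounded in $(\Mcal_{sa},\preceq)$.

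For the converse, assume $y\preceq x\preceq z$ for every $x\in M$ and some $y,z\in\Mcal_{sa}$. By Proposition~\ref{Well known results}(i) the spectral-order inequalities imply the standard-order inequalities $y\leq x\leq z$. Combining with $-\|y\|\unit\leq y$ and $z\leq\|z\|\unit$ (spectral theorem again) and setting $\alpha=\max\{\|y\|,\|z\|\}$, we obtain $-\alpha\unit\leq x\leq\alpha\unit$ for every $x\in M$, which is equivalent to $\|x\|\leq\alpha$.

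There is no real obstacle here: the lemma is essentially a bookkeeping consequence of two already-available facts, namely that spectral-order inequalities imply standard-order ones (Proposition~\ref{Well known results}(i)) and that the two orders coincide for pairs of commuting self-adjoint elements. The only care needed is to produce the commuting upper/lower bound in the first direction, which is handled automatically by taking scalar multiples of $\unit$.
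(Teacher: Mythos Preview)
Your proof is correct and essentially mirrors the paper's argument: both directions reduce to sandwiching by scalar multiples of $\unit$, and your ``order bounded $\Rightarrow$ norm bounded'' step is identical to the paper's. The only cosmetic difference is in the other direction, where the paper verifies $-K\unit\preceq x\preceq K\unit$ by directly checking the spectral families, whereas you invoke the equivalence of $\preceq$ and $\leq$ for commuting pairs; either justification is immediate.
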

	\begin{proof}
			Let $u,v\in\Mcal_{sa}$ be such that $u\preceq x\preceq v$ for all $x\in M$. Take $x\in M$. Then 
				$$
				-\max\{\norm{u},\norm{v}\}\unit\leq u\leq x\leq v\leq \max\{\norm{u},\norm{v}\}\unit.
				$$
				Therefore, $\norm{x}\leq \max\{\norm{u},\norm{v}\}$.
				
				For the converse, set $u=-K\unit$ and $v=K\unit$, where $K\geq 0$ satisfies $\norm{x}\leq K$ for all $x\in M$. Let $x\in M$. Then $E^x_\la=0$ when $\la<-K$ and $E^x_\la=\unit$ when $\la\geq K$. Hence $E^v_\la\leq E^x_\la\leq E^u_\la$ for all $\la\in\rr$. Thus $u\preceq x\preceq v$.
	\end{proof}
	
In the next theorem, we shall see that suprema and infima of increasing nets bounded above and decreasing nets bounded below, respectively, in $(\Mcal_{sa},\preceq)$ correspond to the strong operator limit. This implies, for example, that every decreasing net with a lower bound in $(\Mcal_{sa},\preceq)$ has the infimum in $(\Mcal_{sa},\preceq)$ which is equal to the infimum in $(\Mcal_{sa},\leq)$. In the sequel, the symbol $\slim_{\al\in \Lambda} x_\al$ will be denoted the strong operator limit of a net $(x_\al)_{\al\in\Lambda}$ in a \vNa{}.

\begin{theo}\label{Vigier type theorem}
Let \Mcal{} be a \vNa{}.
	\begin{enumerate}
		\item If $(x_\al)_{\al\in\Lambda}$ is a decreasing net in $(\Mcal_{sa},\preceq)$ with a lower bound, then 
		$$
		\inf_{\al\in\Lambda} x_{\al}=\slim_{\al\in\Lambda}	x_{\al}.
		$$
		\item If $(x_\al)_{\al\in\Lambda}$ is an increasing net in $(\Mcal_{sa},\preceq)$ with an upper bound, then 
		$$
		\sup_{\al\in\Lambda} x_{\al}=\slim_{\al\in\Lambda}	x_{\al}.
		$$
	\end{enumerate}
\end{theo}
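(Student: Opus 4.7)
The plan is to reduce part (i) to the classical Vigier theorem via Proposition~\ref{Well known results}(i), and then promote the resulting standard-order infimum to a spectral-order infimum via the characterization in Proposition~\ref{Well known results}(ii). Part (ii) will follow from (i) by negation.

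For (i), let $(x_\al)_{\al\in\Lambda}$ be decreasing in $(\Mcal_{sa},\preceq)$ with a lower bound. Lemma~\ref{bounded sets} furnishes a constant $K\geq 0$ with $\norm{x_\al}\leq K$ for every $\al\in\Lambda$. By Proposition~\ref{Well known results}(i), the net is also decreasing in $(\Mcal_{sa},\leq)$ and any lower bound in $\preceq$ is a lower bound in $\leq$. The classical Vigier theorem then produces $x:=\slim_{\al\in\Lambda}x_\al\in\Mcal_{sa}$, equal to the infimum in $(\Mcal_{sa},\leq)$ and still norm bounded by $K$. It remains to identify $x$ with the infimum in $(\Mcal_{sa},\preceq)$.

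The bridge is continuous functional calculus via Proposition~\ref{Well known results}(ii), together with the standard fact that if a norm-bounded net of self-adjoint operators converges strongly to a limit, then the same holds after applying any continuous $f\colon\rr\to\rr$. Fix a continuous increasing $f$. For each $\al_0\in\Lambda$ and every $\beta\geq\al_0$, Proposition~\ref{Well known results}(ii) gives $f(x_\beta)\leq f(x_{\al_0})$; passing to the strong limit over $\beta$ and using that $\Mcal_+$ is strongly closed yields $f(x)\leq f(x_{\al_0})$, hence $x\preceq x_{\al_0}$ by Proposition~\ref{Well known results}(ii). For the greatest-lower-bound property, if $z\in\Mcal_{sa}$ satisfies $z\preceq x_\al$ for every $\al$, then $f(z)\leq f(x_\al)$ for every such $f$, and the same strong-limit argument delivers $f(z)\leq f(x)$, whence $z\preceq x$.

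Part (ii) reduces to part (i) by negation. Proposition~\ref{Well known results}(ii) implies that $u\preceq v$ if and only if $-v\preceq -u$, since $f\mapsto\bigl(t\mapsto -f(-t)\bigr)$ is a bijection of the set of continuous increasing functions $\rr\to\rr$ onto itself. Consequently, an increasing net bounded above in $(\Mcal_{sa},\preceq)$ becomes, after negation, a decreasing net bounded below, so part (i) applies; combining $\inf_\al(-x_\al)=\slim_\al(-x_\al)$ with the order-reversal identity $\inf_\al(-x_\al)=-\sup_\al x_\al$ gives $\sup_\al x_\al=\slim_\al x_\al$. The main technical ingredient throughout is the strong-operator continuity of continuous functional calculus on norm-bounded sets, which is a standard consequence of joint strong continuity of multiplication on bounded sets together with polynomial approximation via Stone--Weierstrass on any compact interval containing all relevant spectra.
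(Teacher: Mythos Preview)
Your argument is correct and follows essentially the same route as the paper's: reduce to classical Vigier via Proposition~\ref{Well known results}(i), then use Proposition~\ref{Well known results}(ii) together with strong-operator continuity of continuous functional calculus on norm-bounded sets to identify the strong limit with the spectral infimum, and deduce (ii) from (i) by negation. One small slip: Lemma~\ref{bounded sets} need not give a uniform bound on the \emph{entire} net $(x_\al)_{\al\in\Lambda}$, since a directed set may have no least element and hence no global upper bound for a decreasing net; as the paper does, you should first pass to a cofinal tail $\set{\al\in\Lambda}{\al\geq\al_0}$ before invoking the lemma, which affects neither the strong limit nor the conclusion.
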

		\begin{proof}\hfill{}
			\begin{enumerate}
				\item Set $x=\inf_{\al\in\Lambda} x_\al$. According to Proposition~\ref{Well known results}, $(x_\al)_{\al\in\Lambda}$ is decreasing and bounded below by $x$ in $(\Mcal_{sa},\leq)$. By Vigier's theorem, $y=\slim_{\al\in\Lambda} x_{\al}$ is the infimum of $(x_\al)_{\al\in\Lambda}$ in $(\Mcal_{sa},\leq)$ and so $x\leq y$.
				
			It remains to show the opposite inequality. Let $f:\rr\to\rr$ be a continuous increasing function. By Proposition~\ref{Well known results}, the net $(f(x_\al))_{\al\in\Lambda}$ is decreasing and bounded below in $(\Mcal_{sa},\leq)$. Therefore, $\slim_{\al\in\Lambda} f(x_\al)$ is the infimum of $(f(x_\al))_{\al\in\Lambda}$ in $(\Mcal_{sa},\leq)$. Let $\alpha_0\in\Lambda$. Consider $\Gamma=\set{\al\in\Lambda}{\al_0\leq \al}$. The net $(x_\al)_{\al\in\Gamma}$ is order bounded with respect to the spectral order and so it is norm bounded by Lemma~\ref{bounded sets}. As every continuous real-valued function on \rr{} is strong operator continuous on norm bounded subsets of $\Mcal_{sa}$ (see \cite[Proposition~5.3.2]{Ka97I}),
			$$
			f(y)=\slim_{\al\in\Gamma} f(x_\al)=\slim_{\al\in\Lambda} f(x_\al)\leq f(x_\beta)
			$$
for all $\beta\in\Lambda$. Since $f:\rr\to\rr$ was an arbitrary increasing continuous function, it follows from Proposition~\ref{Well known results} that $y\preceq x_\al$ for all $\al\in\Lambda$. Thus $y\preceq x$ which implies $y\leq x$.
			
				\item Since the function $t\mapsto -t$ is order-reversing, it follows from (i) that
					$$
					\sup_{\al\in\Lambda} x_{\al}=-\inf_{\al\in\Lambda} \left(-x_{\al}\right)=\slim_{\al\in\Lambda}	x_{\al}.
					$$
			\end{enumerate}
		\end{proof}
		
\begin{lem}\label{translation of supremum and infimum}
	Let \Mcal{} be a \vNa{}. Suppose that $\al\in\rr$ and $M\subseteq\Mcal_{sa}$ is nonempty. 
		\begin{enumerate}
			\item If $M\subseteq\Mcal_{sa}$ is bounded above, then $\sup_{x\in M}\left(x+\al\unit\right)=\left(\sup_{x\in M}x\right)+\al\unit$.
			\item If $M\subseteq\Mcal_{sa}$ is bounded below, then $\inf_{x\in M}\left(x+\al\unit\right)=\left(\inf_{x\in M}x\right)+\al\unit$.
		\end{enumerate}
\end{lem}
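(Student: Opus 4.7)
The plan is to leverage Proposition~\ref{Well known results}(vi), which (taking $\al=1$ there) says precisely that translation by a scalar multiple of \unit{} is an order automorphism of $(\Mcal_{sa},\preceq)$. Since any order automorphism of a poset preserves suprema and infima of subsets whenever they exist, both claims reduce to a short bookkeeping argument.

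For part (i), I would let $y=\sup_{x\in M}x$. For every $x\in M$ we have $x\preceq y$, so Proposition~\ref{Well known results}(vi) (with $\beta=\al$) gives $x+\al\unit\preceq y+\al\unit$; hence $y+\al\unit$ is an upper bound of $\{x+\al\unit\mid x\in M\}$ in $(\Mcal_{sa},\preceq)$. For the reverse direction, suppose $z\in\Mcal_{sa}$ satisfies $x+\al\unit\preceq z$ for every $x\in M$. Applying Proposition~\ref{Well known results}(vi) again, this time with $\beta=-\al$, yields $x\preceq z-\al\unit$ for all $x\in M$; thus $z-\al\unit$ is an upper bound of $M$, so $y\preceq z-\al\unit$, and another application of Proposition~\ref{Well known results}(vi) gives $y+\al\unit\preceq z$. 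Therefore $y+\al\unit$ is the least upper bound of $\{x+\al\unit\mid x\in M\}$, which is the required identity.

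Part (ii) can either be proved by the analogous dual argument, or deduced from (i) by noting that negation is order-reversing on $(\Mcal_{sa},\preceq)$ (a consequence of Proposition~\ref{Well known results}(vi) with $\al=-1$, or directly from the fact that $x\mapsto -x$ sends spectral families to spectral families via $E^{-x}_\la=\unit-E^x_{(-\la)-}$) and applying (i) to the set $-M$ with scalar $-\al$. I would choose the direct dual argument for symmetry and brevity.

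I do not expect any real obstacle: the entire content sits in Proposition~\ref{Well known results}(vi), which converts the lemma into a formal statement about translates in an arbitrary partially ordered abelian group. The only thing worth being careful about is making sure existence of the supremum (resp.\ infimum) on the right-hand side is supplied by the hypothesis that $M$ is bounded above (resp.\ below), so that the statement is meaningful before one starts comparing upper bounds.
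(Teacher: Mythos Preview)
Your proof is correct and is exactly the approach the paper indicates: it simply records the lemma as a consequence of Proposition~\ref{Well known results}(vi), and your argument is the natural unpacking of that remark.
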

	\begin{proof}
	Statements are simple consequences of Proposition~\ref{Well known results}.
	\end{proof}
	
\begin{cor}\label{supremum}
	Let \Mcal{} be a \vNa{} and let $M\subseteq\Mcal_{sa}$ be a nonempty set bounded above. Assume that 
	$$
	\Fcal=\set{F\subseteq M}{F\mbox{ finite and nonempty}}.
	$$ 
	If $\delta_F\in(-\infty,\Delta_F]$, where $\Delta_F=\min_{x\in F}\inf\set{\la\in\rr}{E^x_\la\neq 0}$, for every $F\in \Fcal$, then
	\begin{eqnarray*}
	\sup_{x\in M}x=\slim_{F\in\Fcal}\left[\delta_F\unit
									+\slim_{n\in\nn}\left(\sum_{x\in F}\left(x-\delta_F\unit\right)^n\right)^{\frac{1}{n}}\right].
	\end{eqnarray*}
\end{cor}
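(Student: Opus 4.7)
The plan is to reduce the iterated limit to two applications of results already in hand: Kato's formula (recalled in the introduction from \cite{Ka79}) for the inner $n$-limit, and Theorem~\ref{Vigier type theorem} for the outer net limit over $\Fcal$. A preliminary observation is that $\inf\set{\la\in\rr}{E^x_\la\neq 0}$ coincides with $\min\sigma(x)$, so the hypothesis $\delta_F\leq \Delta_F$ forces $x-\delta_F\unit\in\Mcal_+$ for every $x\in F$.

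First, I would fix $F\in\Fcal$ and show that the bracketed expression equals $\sup_{x\in F}x$. Since $x-\delta_F\unit\geq 0$ for each $x\in F$, Kato's formula gives
$$
\slim_{n\in\nn}\left(\sum_{x\in F}(x-\delta_F\unit)^n\right)^{\frac{1}{n}}=\sup_{x\in F}(x-\delta_F\unit),
$$
and Lemma~\ref{translation of supremum and infimum}(i) rewrites the right-hand side as $\bigl(\sup_{x\in F}x\bigr)-\delta_F\unit$. Adding $\delta_F\unit$ recovers $\sup_{x\in F}x$, so that after the inner limit the corollary reduces to the claim
$$
\sup_{x\in M}x=\slim_{F\in\Fcal}\sup_{x\in F}x.
$$

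Second, I would treat the outer limit over $\Fcal$, which is directed by inclusion. If $F\subseteq F'$ then $\sup_{x\in F}x\preceq \sup_{x\in F'}x$, so the net $(\sup_{x\in F}x)_{F\in\Fcal}$ is increasing in the spectral order and bounded above by $\sup_{x\in M}x$. A short argument identifies its spectral supremum: any spectral upper bound $u$ for the net in particular dominates the singleton supremum $\sup_{x\in\{x_0\}}x=x_0$ for every $x_0\in M$, so $u$ is an upper bound for $M$ and hence $u\succeq \sup_{x\in M}x$. Theorem~\ref{Vigier type theorem}(ii) then identifies the strong operator limit of this increasing net with its spectral supremum, which closes the argument.

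The proof is essentially a concatenation of Kato's formula, Lemma~\ref{translation of supremum and infimum}, and Theorem~\ref{Vigier type theorem}; the only point requiring care is the identification of $\inf\set{\la\in\rr}{E^x_\la\neq 0}$ with $\min\sigma(x)$, which is what legitimizes the translation by $\delta_F\unit$ and the application of Kato's formula. No genuine obstacle is anticipated beyond this bookkeeping.
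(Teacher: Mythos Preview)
Your proposal is correct and follows essentially the same route as the paper: reduce the inner limit to Kato's formula after the translation by $\delta_F\unit$ (legitimized via the spectral family, which the paper expresses as $E^{x-\delta_F\unit}_\la=E^{x}_{\la+\delta_F}$ rather than via $\min\sigma(x)$), then identify $\sup_{x\in M}x=\sup_{F\in\Fcal}\sup_{x\in F}x$ and apply Theorem~\ref{Vigier type theorem}(ii) for the outer limit. The only cosmetic difference is that you spell out the verification of $\sup_{F\in\Fcal}\sup_{x\in F}x=\sup_{x\in M}x$, which the paper dismisses as a direct computation.
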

	\begin{proof}
Let $F\subseteq\Fcal$. It is easy to see that $\Delta_F$ is a well defined real number. Take $\delta_F\in(-\infty,\Delta_F]$. If $x\in F$, then $x-\delta_F\unit\geq 0$ because $E^{x-\delta_F\unit}_\la=E^{x}_{\la+\delta_F}$ for every $\la\in\rr$. Applying Lemma~\ref{translation of supremum and infimum} and \cite[Theorem]{Ka79},
$$
\sup_{x\in F} x=\delta_F\unit+\sup_{x\in F}(x-\delta_F\unit)=\delta_F\unit
									+\slim_{n\in\nn}\left(\sum_{x\in F}\left(x-\delta_F\unit\right)^n\right)^{\frac{1}{n}}.
$$
It can be easily verified by a direct computation that $\sup_{x\in M} x=\sup_{F\in\Fcal} \sup_{x\in F} x$. Consequently, Theorem~\ref{Vigier type theorem} establishes the desired conclusion.
	\end{proof}
	
If the set $M$ from the previous corollary is bounded, we can choose $\delta_F$ independently of the choice of $F\in\Fcal$. Indeed, if $u$ is a lower bound of $M$, then we can take $\delta_F=-\norm{u}$ for all $F\in\Fcal{}$. In particular, if $M$ contains only positive elements, then we have 
$$
\sup_{x\in M}x=\slim_{F\in\Fcal}\slim_{n\in\nn}\left(\sum_{x\in F}x^n\right)^{\frac{1}{n}}
$$
which is a natural generalization of the formula from \cite[Theorem]{Ka79}.
	
\begin{cor}\label{infimum}
Let \Mcal{} be a \vNa{} and let $M\subseteq\Mcal_{sa}$ be a nonempty set with a lower bound $u$. Assume that 
	$$
	\Fcal=\set{F\subseteq M}{F\mbox{ finite and nonempty}}.
	$$ 
	If $\delta\in\rr$ is such that $u+\delta\unit$ is a positive invertible element, then 
	\begin{eqnarray*}
	\inf_{x\in M}x=-\delta\unit+\slim_{F\in\Fcal}\slim_{n\in\nn}\left(\sum_{x\in F}\left(x+\delta\unit\right)^{-n}\right)^{-\frac{1}{n}}.
	\end{eqnarray*}
\end{cor}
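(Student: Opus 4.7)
The plan is to mirror the proof of Corollary~\ref{supremum}: reduce the computation of $\inf_{x \in M} x$ to infima over finite subsets via Vigier's theorem, and handle each finite infimum by the finite-family analogue of the Ando--Fujii--Nakamoto--Nakamura formula, derived from Kadison's finite supremum formula via inversion.

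First, I would verify that for every $x \in M$ the operator $x + \delta\unit$ is positive and invertible. The hypothesis provides $\e > 0$ with $u + \delta\unit \geq \e\unit$, and since $u \preceq x$ implies $u \leq x$ by Proposition~\ref{Well known results}(i), it follows that $x + \delta\unit \geq u + \delta\unit \geq \e\unit$, so each $(x+\delta\unit)^{-1}$ exists and satisfies $0 \leq (x+\delta\unit)^{-1} \leq \e^{-1}\unit$.

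Next, for each $F \in \Fcal$ I would establish the finite-family formula
$$
\inf_{x \in F}(x+\delta\unit) = \slim_{n\in\nn}\Bigl(\sum_{x\in F}(x+\delta\unit)^{-n}\Bigr)^{-1/n}.
$$
The idea is to apply Kadison's finite supremum formula from \cite{Ka79} to the positive operators $\{(x+\delta\unit)^{-1} : x \in F\}$, obtaining
$$
\sup_{x\in F}(x+\delta\unit)^{-1} = \slim_{n\in\nn}\Bigl(\sum_{x\in F}(x+\delta\unit)^{-n}\Bigr)^{1/n},
$$
and then invert. That $y \mapsto y^{-1}$ is a spectral-order anti-automorphism on positive invertibles, so $\sup_{x\in F}(x+\delta\unit)^{-1} = (\inf_{x\in F}(x+\delta\unit))^{-1}$, follows from Proposition~\ref{Well known results}(ii) applied to continuous increasing extensions of $s \mapsto -1/s$ to $\rr$. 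To pass the strong limit through the inversion, one uses the identity $y_n^{-1} - b^{-1} = y_n^{-1}(b-y_n)b^{-1}$, which gives strong continuity of inversion on norm-bounded sequences of invertibles admitting a uniform positive lower bound. Fixing any $x_0 \in F$, operator monotonicity of the $n$th root yields $(\sum_{x\in F}(x+\delta\unit)^{-n})^{1/n} \geq (x_0+\delta\unit)^{-1} \geq \norm{x_0+\delta\unit}^{-1}\unit$, so the required uniform lower bound is at hand. Combining with Lemma~\ref{translation of supremum and infimum}(ii) produces
$$
\inf_{x \in F} x = -\delta\unit + \slim_{n\in\nn}\Bigl(\sum_{x\in F}(x+\delta\unit)^{-n}\Bigr)^{-1/n}.
$$

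Third, $\Fcal$ is directed by inclusion, and $F_1 \subseteq F_2$ implies $\inf_{x\in F_2} x \preceq \inf_{x\in F_1} x$, so $(\inf_{x \in F} x)_{F \in \Fcal}$ is a decreasing net in $(\Mcal_{sa}, \preceq)$ bounded below by $u$. A direct check of the universal property of the infimum gives $\inf_{F \in \Fcal} \inf_{x \in F} x = \inf_{x \in M} x$. Theorem~\ref{Vigier type theorem}(i) then converts this infimum into the strong operator limit $\slim_{F \in \Fcal} \inf_{x \in F} x$, and substituting the second-step formula yields the asserted expression. The main obstacle is precisely the finite-family infimum formula in the second step, which is not explicitly cited in the paper; rigorously transferring Kadison's finite supremum formula to the finite infimum formula via inversion, while ensuring that the strong operator limit commutes with inversion on the relevant uniformly lower-bounded invertible sequences, is the technical heart of the argument.
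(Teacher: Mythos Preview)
Your proposal is correct and follows the same architecture as the paper's proof: shift by $\delta$, reduce $\inf_{x\in M}$ to $\inf_{F\in\Fcal}\inf_{x\in F}$ via Theorem~\ref{Vigier type theorem}, express each finite infimum by applying Kato's supremum formula \cite{Ka79} to the inverses, and then justify pushing the inversion through the strong operator limit. The differences are in the technical lemmas used at two points. For the uniform lower bound on $\bigl(\sum_{x\in F}(x+\delta\unit)^{-n}\bigr)^{1/n}$, the paper invokes operator concavity of $t\mapsto t^{1/n}$ and Jensen's operator inequality \cite{HP03}, whereas you obtain it more directly from operator monotonicity of the $n$th root applied to the trivial inequality $\sum_{x\in F}(x+\delta\unit)^{-n}\geq(x_0+\delta\unit)^{-n}$; your argument is shorter and avoids the Hansen--Pedersen citation. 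For strong continuity of inversion on operators bounded below by $\alpha\unit$, the paper uses the bounded continuous function $t\mapsto(\alpha+|t-\alpha|)^{-1}$ together with \cite[Theorem~5.3.4]{Ka97I}, while you use the resolvent-type identity $y_n^{-1}-b^{-1}=y_n^{-1}(b-y_n)b^{-1}$ combined with uniform boundedness of $y_n^{-1}$; both are standard and valid. One minor correction: the reference \cite{Ka79} is Kato, not Kadison.
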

	\begin{proof}
	The set $M+\delta\unit$ consists of positive invertible elements because its lower bound $u+\delta \unit$ is a positive invertible element. Therefore, the infimum of every nonempty subset of $M+\delta\unit$ exists and is a positive invertible element. From Theorem~\ref{Vigier type theorem} and the fact that the inverse operation is order-reversing for the spectral order (see \cite{An89,FNN98}), we have
$$
\inf_{x\in M}(x+\delta \unit)=\inf_{F\in \Fcal}\inf_{x\in F}(x+\delta \unit)=\slim_{F\in\Fcal}\inf_{x\in F}(x+\delta \unit)=\slim_{F\in\Fcal}\left[\sup_{x\in F}(x+\delta \unit)^{-1}\right]^{-1}.
$$
By \cite[Theorem]{Ka79},
$$
\inf_{x\in M}(x+\delta \unit)=\slim_{F\in\Fcal}\left[\slim_{n\in\nn}\left(\sum_{x\in F}(x+\delta \unit)^{-n}\right)^{\frac{1}{n}}\right]^{-1}.
$$

Let $F\in\Fcal$ be arbitrary. If $n\in\nn$, then the function $t\mapsto t^{\frac{1}{n}}$ is operator concave on $[0,\infty)$ (see \cite[Proposition 1.3.11]{Pe79}). Applying Jensen's operator inequality \cite[Theorem~2.1]{HP03}, we see that there is $\al>0$ such that 
$$
\left(\sum_{x\in F}(x+\delta \unit)^{-n}\right)^{\frac{1}{n}}\geq |F|^{\frac{1}{n}}\sum_{x\in F} \frac{(x+\delta \unit)^{-1}}{|F|}\geq \frac{1}{|F|}\sum_{x\in F} (x+\delta \unit)^{-1}\geq \al\unit
$$
for all $n\in \nn$. By \cite[Theorem~5.3.4]{Ka97I}, the function $t\mapsto \frac{1}{\al+|t-\al|}$ is strong operator continuous on $\Mcal_{sa}$.
Hence
$$
 \inf_{x\in M} x
=-\delta\unit+\inf_{x\in M}(x+\delta \unit)=-\delta\unit+\slim_{F\in\Fcal}\slim_{n\in\nn}\left(\sum_{x\in F}(x+\delta \unit)^{-n}\right)^{-\frac{1}{n}}.
$$
	\end{proof}

Note that the previous result generalizes the formula for the infimum of two positive invertible elements in terms of the harmonic mean \cite{An89,FNN98}. Indeed, if $M$ is a nonempty finite set of positive invertible elements, then there is $\al>0$ such that $\al\unit\preceq x$ for all $x\in M$. Therefore, 
	$$
	\inf_{x\in M}x=\slim_{n\in\nn}\left(\sum_{x\in M}x^{-n}\right)^{-\frac{1}{n}}=\slim_{n\in\nn}\left(\sum_{x\in M}\frac{x^{-n}}{|M|}\right)^{-\frac{1}{n}}.
	$$

The next result says that the suprema and infima of a nonempty set of self-adjoint operators does not depend on underlying \vNa{}.

\begin{lem}\label{independence}
 Let \Mcal{} and \Ncal{} be \vNa{}s with the same unit and let $M$ be a nonempty subset of $\Mcal_{sa}$ as well as $\Ncal_{sa}$. 
	\begin{enumerate}
		\item If $M$ has an upper bound in $(\Mcal_{sa},\preceq)$, then both the supremum of $M$ in $(\Mcal_{sa},\preceq)$ and the supremum of $M$ in $(\Ncal_{sa},\preceq)$ exist and coincide.
		\item If $M$ has a lower bound in $(\Mcal_{sa},\preceq)$, then both the infimum of $M$ in $(\Mcal_{sa},\preceq)$ and the infimum of $M$ in $(\Ncal_{sa},\preceq)$ exist and coincide.
	\end{enumerate}
\end{lem}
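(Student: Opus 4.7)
The plan is to reduce both parts to the spectral-family characterizations in Proposition~\ref{Well known results}(iii) and (iv), after first noting that an upper (resp.\ lower) bound for $M$ in $(\Mcal_{sa},\preceq)$ can always be replaced by a scalar multiple of $\unit$, which automatically lies in \Ncal{} as well.

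Two intrinsicness remarks underpin the argument. First, the spectral family $(E^x_\la)_{\la\in\rr}$ of any $x\in\Mcal_{sa}\cap\Ncal_{sa}$ is produced by the bounded Borel functional calculus in $\Bcal(\Hcal)$, so it is determined by $x$ alone and does not depend on whether $x$ is viewed inside \Mcal{} or \Ncal{}. Second, for any family $(p_i)_{i\in I}$ of projections contained in both \Mcal{} and \Ncal{}, the lattice operations $\bigwedge_i p_i$ and $\bigvee_i p_i$ computed in $P(\Mcal)$ agree with those computed in $P(\Ncal)$, because both admit ambient-free descriptions: the infimum is the projection onto $\bigcap_i p_i(\Hcal)$ and the supremum is the projection onto the closure of $\mathrm{span}\,\bigcup_i p_i(\Hcal)$.

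For (i), I would argue that if $v\in\Mcal_{sa}$ satisfies $x\preceq v$ for all $x\in M$, then $x\leq v\leq\norm{v}\unit$, equivalently $E^x_\la=\unit$ for every $\la\geq\norm{v}$, which is precisely $x\preceq\norm{v}\unit$. Hence $\norm{v}\unit\in\Ncal_{sa}$ is an upper bound of $M$ in $(\Ncal_{sa},\preceq)$ as well, and Proposition~\ref{Well known results}(iii) produces suprema in both posets whose spectral families are both equal to $(\inf_{x\in M}E^x_\la)_{\la\in\rr}$. By the intrinsicness remarks this spectral family is one and the same object, so the bijection between self-adjoint operators and spectral families forces the two suprema to coincide. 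Part (ii) is symmetric: from a lower bound $u\in\Mcal_{sa}$ one extracts $-\norm{u}\unit$ (the required spectral-family check is immediate), which is a lower bound in both algebras, and Proposition~\ref{Well known results}(iv) identifies both infima with the self-adjoint operator whose common spectral family is $(\inf_{\mu>\la}\sup_{x\in M}E^x_\mu)_{\la\in\rr}$.

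The only genuine technicality is the second intrinsicness remark — that suprema and infima of projections in $P(\Mcal)$ coincide with those in $P(\Ncal)$ whenever all projections involved belong to both algebras. Once this is granted, everything else is routine bookkeeping with spectral families, and no recourse to the strong operator limit formulas of Corollaries~\ref{supremum} and \ref{infimum} is required.
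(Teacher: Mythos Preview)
Your proposal is correct and follows essentially the same route as the paper: both rely on the intrinsicness of spectral families and of projection lattice operations, then invoke Proposition~\ref{Well known results}(iii)--(iv). You are more explicit than the paper about why an upper (resp.\ lower) bound in $\Mcal_{sa}$ yields one in $\Ncal_{sa}$, via the scalar $\pm\norm{\cdot}\unit$; the paper leaves this implicit. The only cosmetic difference is that for part~(ii) the paper reduces to~(i) via the order-reversing map $t\mapsto -t$, whereas you apply Proposition~\ref{Well known results}(iv) directly---both arguments are equally short and valid.
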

	\begin{proof}\hfill{}
		\begin{enumerate}
			\item Spectral families of $x\in M$ in \Mcal{} and \Ncal{} are equal. Since $\inf_{x\in M}E^x_\la$ in $(P(\Mcal),\leq)$ coincide with the $\inf_{x\in M}E^x_\la$ in $(P(\Ncal),\leq)$, the desired result follows from Proposition~\ref{Well known results}.
			\item This follows from (i) by considering the order-reversing map $t\mapsto-t$.
		\end{enumerate}
	\end{proof}

Recall that self-adjoint operators $x$ and $y$ are said to be {\it orthogonal} if $xy=0$.

	\begin{cor}
	Let $M$ be a nonempty set of mutually orthogonal self-adjoint elements of a \vNa{} $\Mcal$. Suppose that 
	$$
	\Fcal=\set{F\subseteq M}{F\mbox{ finite and nonempty}}.
	$$ 	
	If $M$ is bounded above and it is not a singleton, then
	$$
	\sup_{x\in M}x=\slim_{F\in\Fcal}\sum_{x\in F}x^+,
	$$
where $x^+$ is the positive part of $x$.
\end{cor}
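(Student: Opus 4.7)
The plan is to reduce to the abelian case and then invoke Theorem~\ref{Vigier type theorem}. First I would note that any two elements $x,y\in M$ commute: if $x\neq y$ then $xy=0$ and by self-adjointness $yx=(xy)^{*}=0$. Consequently the von Neumann subalgebra $\Ncal\subseteq\Mcal$ generated by $M\cup\{\unit\}$ is abelian, so by Proposition~\ref{Well known results}(ii) together with the pointwise representation $\Ncal\cong L^{\infty}(\Omega,\mu)$, the restrictions of $\preceq$ and $\leq$ to $\Ncal_{sa}$ coincide. By Lemma~\ref{independence}, suprema with respect to $\preceq$ may be computed inside $\Ncal$.

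Next I would establish the key finite formula: for every $F\in\Fcal$ with $|F|\geq 2$,
\[
\sup_{x\in F} x=\sum_{x\in F} x^{+}.
\]
Identifying $\Ncal$ with $L^{\infty}(\Omega,\mu)$, each $x\in F$ becomes a real measurable function, and mutual orthogonality translates to pairwise disjoint supports. On $\operatorname{supp} x$ every other $y\in F$ vanishes, so the pointwise maximum of $F$ equals $\max(x,0)=x^{+}$; off $\bigcup_{x\in F}\operatorname{supp} x$ every element is $0$, so the pointwise maximum is $0$. Summing pieces gives $\sum_{x\in F}x^{+}$. The hypothesis $|F|\geq 2$ is essential here: one needs at least one other element providing the zero against which the maximum with $0$ is actually taken.

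Third, the net $(\sum_{x\in F} x^{+})_{F\in\Fcal}$ is increasing and bounded above in $(\Mcal_{sa},\preceq)$. Increasing holds because the $x^{+}$ for $x\in M$ have pairwise orthogonal range projections, are therefore commuting and positive, and adding terms of $F$ only adds nonnegative elements in the abelian algebra they generate, where $\preceq$ coincides with $\leq$. For an upper bound, given $F\in\Fcal$ choose $z\in M\setminus F$ (using $|M|\geq 2$, enlarging $F$ if necessary); then by the previous step $\sum_{x\in F\cup\{z\}}x^{+}=\sup_{x\in F\cup\{z\}}x\preceq y$ for any upper bound $y$ of $M$, hence $\sum_{x\in F}x^{+}\preceq y$. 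Applying Theorem~\ref{Vigier type theorem}(ii) yields
\[
\sup_{F\in\Fcal}\sum_{x\in F}x^{+}=\slim_{F\in\Fcal}\sum_{x\in F}x^{+}.
\]

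Finally, the subfamily $\Fcal_{\geq 2}:=\{F\in\Fcal\mid |F|\geq 2\}$ is cofinal in $\Fcal$ (since $|M|\geq 2$), and a standard lattice identity combined with the finite formula gives
\[
\sup_{x\in M} x=\sup_{F\in\Fcal_{\geq 2}}\sup_{x\in F}x=\sup_{F\in\Fcal_{\geq 2}}\sum_{x\in F}x^{+}=\sup_{F\in\Fcal}\sum_{x\in F}x^{+},
\]
which together with the Vigier step yields the claimed equality. The main obstacle I expect is the careful justification of the finite-set identity $\sup_{x\in F}x=\sum_{x\in F}x^{+}$ for $|F|\geq 2$, and the correct bookkeeping showing that the non-singleton hypothesis on $M$ is exactly what excludes the pathology at $|F|=1$ (where $\sup_{\{x\}}x=x\neq x^{+}$ whenever $x$ has a nontrivial negative part) while still providing cofinally many $F\in\Fcal_{\geq 2}$ to recover $\sup_{x\in M}x$.
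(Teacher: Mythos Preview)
Your proposal is correct and follows essentially the same route as the paper: reduce to the abelian von Neumann subalgebra generated by $M\cup\{\unit\}$, verify the finite identity $\sup_{x\in F}x=\sum_{x\in F}x^{+}$ there (the paper uses a $C(X)$ picture where you use $L^{\infty}$), transfer it back via Lemma~\ref{independence}, and apply Theorem~\ref{Vigier type theorem} to the increasing net. You are in fact more careful than the paper about the $|F|=1$ pathology and the cofinality of $\Fcal_{\geq 2}$, which the paper leaves implicit in its final displayed chain.
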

	\begin{proof}
By orthogonality of operators in $M$, the \vNa{} $\Ncal$ generated by $M\cup\{\unit\}$ is abelian. Let $F\in\Fcal{}$ contain at least two elements. It is easy to show by considering a \Ca{} $C(X)$ \Sic{} to \Ncal{} that $\sup_{x\in F}x=\sum_{x\in F}x^+$ in $(\Ncal{},\preceq)=(\Ncal{},\leq)$. According to Lemma~\ref{independence}, $\sup_{x\in F}x=\sum_{x\in F}x^+$ in $(\Mcal{},\preceq)$. Using Theorem~\ref{Vigier type theorem}, 
$$
\sup_{x\in M}x=\sup_{F\in \Fcal{}}\sup_{x\in F}x=\slim_{F\in\Fcal}\sum_{x\in F}x^+.
$$
	\end{proof}
	
Note that if $M$ is a set of at least two mutually orthogonal self-adjoint operators and is bounded below in $(\Mcal_{sa},\preceq)$, then $\inf_{x\in M} x=-\slim_{F\in\Fcal}\sum_{x\in F} x^-$, where $x^-$ is the negative part of $x$, because $t\mapsto -t$ is order-reversing.

%%%%%%%%%%%%%%%%%%%%%%%%%%%%%%%%%%%%%%%%%%%%%%%%%%%%%%%%%%%%%%%%%%%%%%%%
%%%%%%%%%%%%%%%%%%%%%%%%%%%%%%%%%%%%%%%%%%%%%%%%%%%%%%%%%%%%%%%%%%%%%%%%
\section{Order topology}

\begin{theo}\label{order convergence and spectral order}
Let \Mcal{} be a \vNa{}. If a net $(x_\al)_{\al\in\Lambda}$ is order convergent to $x$ in $(\Mcal_{sa},\preceq)$, then $(x_\al)_{\al\in\Lambda}$ is order convergent to $x$ in $(\Mcal_{sa},\leq)$.
\end{theo}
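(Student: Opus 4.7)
The plan is to show that the very same sandwiching nets that witness order convergence in $(\Mcal_{sa},\preceq)$ also witness order convergence in $(\Mcal_{sa},\leq)$. By definition, there exist an increasing net $(y_\al)_{\al\in\Lambda}$ and a decreasing net $(z_\al)_{\al\in\Lambda}$ in $(\Mcal_{sa},\preceq)$ with $y_\al\preceq x_\al\preceq z_\al$ for all $\al$ and $\sup_{\al\in\Lambda}y_\al=\inf_{\al\in\Lambda}z_\al=x$, where the supremum and infimum are computed with respect to $\preceq$.

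First I would use Proposition~\ref{Well known results}(i) to transfer the pointwise comparisons: from $y_\al\preceq x_\al\preceq z_\al$ and from the monotonicity of $(y_\al)$ and $(z_\al)$ with respect to $\preceq$, the inequalities $y_\al\leq x_\al\leq z_\al$ and the monotonicity with respect to $\leq$ follow immediately. In particular $(y_\al)$ is increasing in $(\Mcal_{sa},\leq)$ and, because $y_\al\preceq x$ gives $y_\al\leq x$, the element $x$ serves as an upper bound; symmetrically, $(z_\al)$ is decreasing and bounded below by $x$.

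Next I would identify the ordinary-order supremum of $(y_\al)$ with $x$. Applying Theorem~\ref{Vigier type theorem}(ii) to the $\preceq$-increasing net $(y_\al)$ with $\preceq$-upper bound $x$ gives $\slim_{\al\in\Lambda}y_\al=\sup_{\al\in\Lambda}y_\al=x$, where the supremum is with respect to $\preceq$. On the other hand, by the classical Vigier theorem applied to the $\leq$-increasing net $(y_\al)$ bounded above by $x$, its $\leq$-supremum exists and equals the same strong operator limit $\slim_{\al\in\Lambda}y_\al$. Combining these, the $\leq$-supremum of $(y_\al)$ equals $x$. A completely analogous argument, using Theorem~\ref{Vigier type theorem}(i), yields that the $\leq$-infimum of $(z_\al)$ equals $x$. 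Putting everything together, the nets $(y_\al)$ and $(z_\al)$ witness order convergence of $(x_\al)$ to $x$ in $(\Mcal_{sa},\leq)$.

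There is essentially no hard step left once Theorem~\ref{Vigier type theorem} is available; the only subtlety worth flagging is that one must know the sandwiching nets remain monotone and pointwise comparable after passing from $\preceq$ to $\leq$, which is exactly what Proposition~\ref{Well known results}(i) supplies. The conceptual content is that the spectral-order Vigier theorem forces the two a priori different notions of monotone convergence (in $\preceq$ and in $\leq$) to share the same strong-operator limit, which is why an order convergent net in the stronger order is automatically order convergent in the weaker one.
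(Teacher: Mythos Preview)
Your proposal is correct and follows essentially the same approach as the paper: transfer the sandwich and monotonicity from $\preceq$ to $\leq$ via Proposition~\ref{Well known results}(i), then use Theorem~\ref{Vigier type theorem} together with the classical Vigier theorem to identify the $\leq$-supremum and $\leq$-infimum of the sandwiching nets with the common strong operator limit $x$. The paper's proof is terser but the logical content is identical.
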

	\begin{proof}
Let $(y_\al)_{\al\in\Lambda}$ and $(z_\al)_{\al\in\Lambda}$ be nets in $\Mcal_{sa}$ such that $y_\al\preceq x_\al\preceq z_\al$ for all $\al\in \Lambda$, $(y_\al)_{\al\in\Lambda}$ is increasing net with the supremum $x$ in $(\Mcal_{sa},\preceq)$ and $(z_\al)_{\al\in\Lambda}$ is decreasing net with the infimum $x$ in $(\Mcal_{sa},\preceq)$. 
By Proposition~\ref{Well known results}, nets $(y_\al)_{\al\in\Lambda}$ and $(z_\al)_{\al\in\Lambda}$ are increasing and decreasing in $(\Mcal_{sa},\leq)$, respectively, and $y_\al\leq x_\al\leq z_\al$ for all $\al\in \Lambda$. It follows from Theorem~\ref{Vigier type theorem} and classical Vigier's theorem that $x$ is the supremum of $(y_\al)_{\al\in\Lambda}$ and the infimum of $(z_\al)_{\al\in\Lambda}$ in $(\Mcal_{sa},\leq)$.
	\end{proof}

Let $\Mcal_*$ be the predual of a \vNa{} \Mcal{}. We denote by $s(\Mcal,\Mcal_*)$ and $\tau(\Mcal,\Mcal_*)$ the $\sigma$-strong topology and the Mackey topology on \Mcal{}, respectively.

\begin{cor}\label{inclusions between topologies}
If \Mcal{} is a \vNa{}, then	
$$
s(\Mcal,\Mcal_*)|_{\Mcal_{sa}}\subseteq \tau(\Mcal,\Mcal_*)|_{\Mcal_{sa}}\subseteq \tau_o(\Mcal_{sa},\leq)\subseteq\tau_o(\Mcal_{sa},\preceq).
$$
\end{cor}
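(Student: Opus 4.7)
The plan is to handle the three inclusions in the chain separately; only the rightmost one requires genuine work, while the first two rest on standard or previously cited facts.

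For $s(\Mcal,\Mcal_*)|_{\Mcal_{sa}}\subseteq \tau(\Mcal,\Mcal_*)|_{\Mcal_{sa}}$, I would invoke the general principle that among all locally convex topologies on $\Mcal$ whose continuous dual coincides with $\Mcal_*$ the Mackey topology $\tau(\Mcal,\Mcal_*)$ is the finest. Since the $\sigma$-strong topology $s(\Mcal,\Mcal_*)$ has continuous dual $\Mcal_*$, it is therefore coarser than $\tau(\Mcal,\Mcal_*)$, and restriction to $\Mcal_{sa}$ preserves this containment. The middle inclusion $\tau(\Mcal,\Mcal_*)|_{\Mcal_{sa}}\subseteq \tau_o(\Mcal_{sa},\leq)$ is exactly the result of \cite{ChHW15} recalled in the introduction, and I would simply cite it.

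The substantive step is $\tau_o(\Mcal_{sa},\leq)\subseteq \tau_o(\Mcal_{sa},\preceq)$, and I would argue by closed sets. Let $F\subseteq \Mcal_{sa}$ be closed in $\tau_o(\Mcal_{sa},\leq)$, which by definition means $F$ is order closed with respect to $\leq$. To show $F$ is also order closed with respect to $\preceq$, suppose $(x_\al)_{\al\in\Lambda}$ is a net in $F$ that is $\preceq$-order convergent to some $x\in\Mcal_{sa}$. Theorem~\ref{order convergence and spectral order} then yields that $(x_\al)_{\al\in\Lambda}$ is $\leq$-order convergent to $x$ as well, and the $\leq$-order closedness of $F$ forces $x\in F$. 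Hence $F$ is $\preceq$-order closed, which translates precisely into $\tau_o(\Mcal_{sa},\leq)\subseteq \tau_o(\Mcal_{sa},\preceq)$.

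The only real obstacle here is conceptual: one must be careful to track the direction in which the implication ``$\preceq$-order convergence $\Rightarrow$ $\leq$-order convergence'' passes to topologies. The point is that fewer order-convergent nets yield a \emph{finer} order topology, because the order topology is characterised as the finest topology preserving order convergence, so a stricter notion of convergence constrains fewer open sets. Once this is internalised and Theorem~\ref{order convergence and spectral order} is in hand, the proof reduces to a short one-line diagram chase, and the three-inclusion chain follows by transitivity.
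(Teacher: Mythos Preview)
Your proof is correct and follows essentially the same approach as the paper: the first inclusion is treated as standard, the second is cited from \cite{ChHW15}, and the third is derived from Theorem~\ref{order convergence and spectral order} together with the characterisation of the order topology as the finest topology preserving order convergence. The only difference is cosmetic---you unpack the last step explicitly via closed sets, whereas the paper invokes the characterisation in a single sentence.
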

	\begin{proof}
Since the order topology is the finest topology preserving the order convergence, it follows from Theorem~\ref{order convergence and spectral order} that $\tau_o(\Mcal_{sa},\leq)\subseteq\tau_o(\Mcal_{sa},\preceq)$. It was proved in \cite{ChHW15} that $\tau(\Mcal,\Mcal_*)|_{\Mcal_{sa}}\subseteq \tau_o(\Mcal_{sa},\leq)$. The remaining inclusion is well known.
	\end{proof}

\begin{pro}\label{closed sets}
	If \Mcal{} is a \vNa{}, then the sets $\Mcal_+$, $B_1(\Mcal_{sa})$, $\Ecal(\Mcal)$, and $P(\Mcal)$ are closed in $\tau_o(\Mcal_{sa},\preceq)$.
\end{pro}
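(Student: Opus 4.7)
For each $L\in\{\Mcal_+,B_1(\Mcal_{sa}),\Ecal(\Mcal),P(\Mcal)\}$, I would show that no net in $L$ can be order convergent in $(\Mcal_{sa},\preceq)$ to a point outside $L$. The plan is to exploit Theorem~\ref{order convergence and spectral order}, which reduces order convergence in $\preceq$ to order convergence in $\leq$: fixing $(x_\al)_{\al\in\Lambda}\subseteq L$ order convergent to $x\in\Mcal_{sa}$, I obtain nets $y_\al\leq x_\al\leq z_\al$ with $(y_\al)$ increasing and $(z_\al)$ decreasing in $\leq$ and $\sup_{\al\in\Lambda} y_\al=\inf_{\al\in\Lambda} z_\al=x$.

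For $L=\Mcal_+$ the inequality $0\leq x_\al\leq z_\al$ shows that $0$ is a lower bound of $(z_\al)$, whence $0\leq x$. For $L=B_1(\Mcal_{sa})$ the pair of inequalities $-\unit\leq x_\al\leq z_\al$ and $y_\al\leq x_\al\leq\unit$ supply $-\unit$ as a lower bound for $(z_\al)$ and $\unit$ as an upper bound for $(y_\al)$, giving $-\unit\leq x\leq\unit$. Closedness of $\Ecal(\Mcal)=\Mcal_+\cap B_1(\Mcal_{sa})$ is then automatic as an intersection of closed sets.

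The main obstacle is $L=P(\Mcal)$, because being a projection is not expressible as a pure order inequality in $\preceq$. My plan here is to upgrade the order squeeze to strong operator convergence of $(p_\al)=(x_\al)$ itself. Classical Vigier's theorem yields $y_\al\to x$ and $z_\al\to x$ in the strong operator topology. From $0\leq x_\al-y_\al\leq z_\al-y_\al$ and the computation
$$
\norm{(x_\al-y_\al)^{1/2}\xi}^2=\langle(x_\al-y_\al)\xi,\xi\rangle\leq\langle(z_\al-y_\al)\xi,\xi\rangle\to 0
$$
for every $\xi\in\Hcal$, together with eventual norm boundedness of $(x_\al-y_\al)$ (via Lemma~\ref{bounded sets} applied to the order interval $[y_{\al_0},z_{\al_0}]$ for any fixed $\al_0$), strong continuity of multiplication on bounded subsets of $\Mcal$ yields $x_\al-y_\al\to 0$ strongly, whence $p_\al\to x$ strongly. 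The uniform bound $\norm{p_\al}\leq 1$ then ensures $p_\al^2\to x^2$ strongly, and since $p_\al^2=p_\al\to x$ we obtain $x^2=x$; combined with $x\in\Mcal_{sa}$, this gives $x\in P(\Mcal)$.
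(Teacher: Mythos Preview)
Your argument is correct. The paper, however, takes a much shorter route: it invokes Corollary~\ref{inclusions between topologies} to conclude that the strong operator topology restricted to $\Mcal_{sa}$ is coarser than $\tau_o(\Mcal_{sa},\preceq)$, and then simply observes that $\Mcal_+$, $B_1(\Mcal_{sa})$, $\Ecal(\Mcal)$, and $P(\Mcal)$ are all closed in the strong operator topology, hence closed in the finer order topology.

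The difference is mainly one of packaging. For $\Mcal_+$, $B_1(\Mcal_{sa})$, and $\Ecal(\Mcal)$ your proofs are purely order-theoretic and avoid any reference to the strong operator topology or to the external result from \cite{ChHW15} used in Corollary~\ref{inclusions between topologies}; this is a small gain in self-containment. For $P(\Mcal)$, your squeeze argument effectively reproves, in the special case at hand, the implication ``order convergence in $(\Mcal_{sa},\leq)$ $\Rightarrow$ strong operator convergence'' that underlies the first inclusion in Corollary~\ref{inclusions between topologies}. So your approach trades one citation for a page of computation; the paper's approach trades that computation for the prior corollary. A minor remark: your appeal to Lemma~\ref{bounded sets} for the eventual norm boundedness of $(x_\al-y_\al)$ is unnecessary, since the $\leq$-bounds $y_{\al_0}\leq x_\al\leq z_{\al_0}$ for $\al\geq\al_0$ already give a uniform norm bound directly.
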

	\begin{proof}
The strong operator topology is weaker than $s(\Mcal,\Mcal_*)$ and so its restriction to $\Mcal_{sa}$ is weaker than $\tau_o(\Mcal_{sa},\preceq)$ by Corollary~\ref{inclusions between topologies}. Since $\Mcal_+$, $B_1(\Mcal_{sa})$, $\Ecal(\Mcal)$, and $P(\Mcal)$ are closed in the strong operator topology, we see that they are closed in $\tau_o(\Mcal_{sa},\preceq)$.
	\end{proof}
	
\begin{cor}\label{restrictions of order topology}
Let \Mcal{} be a \vNa{}. Then the following statements hold:
	\begin{enumerate}
		\item $\tau_o(\Mcal_{sa},\preceq)|_{\Mcal_+}=\tau_o(\Mcal_+,\preceq)$.
		\item $\tau_o(\Mcal_{sa},\preceq)|_{B_1(\Mcal_{sa})}=\tau_o(B_1(\Mcal_{sa}),\preceq)$.
		\item $\tau_o(\Mcal_{sa},\preceq)|_{\Ecal(\Mcal)}=\tau_o(\Mcal_{+},\preceq)|_{\Ecal(\Mcal)}=\tau_o(B_1(\Mcal_{sa}),\preceq)|_{\Ecal(\Mcal)}=\tau_o(\Ecal(\Mcal),\preceq)$.
		\item \begin{eqnarray*}
						\tau_o(\Mcal_{sa},\preceq)|_{P(\Mcal)}&=&\tau_o(\Mcal_{+},\preceq)|_{P(\Mcal)}=\tau_o(B_1(\Mcal_{sa}),\preceq)|_{P(\Mcal)}\\
																									&=&\tau_o(\Ecal(\Mcal),\preceq)|_{P(\Mcal)}=\tau_o(P(\Mcal),\preceq).
					\end{eqnarray*}
	\end{enumerate}
\end{cor}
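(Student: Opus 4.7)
The plan is to prove, for every pair $L\subseteq K$ of sets drawn from $\{P(\Mcal),\Ecal(\Mcal),B_1(\Mcal_{sa}),\Mcal_+,\Mcal_{sa}\}$ that figures in the statement, the single equality $\tau_o(K,\preceq)|_L=\tau_o(L,\preceq)$; the four chains of equalities will then follow by iterated restriction of topologies.

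For $\tau_o(K,\preceq)|_L\subseteq\tau_o(L,\preceq)$ I would use that the diagram displayed after Proposition~\ref{suprema and infima in various posets} yields $L\sqsubseteq K$, so every net order-convergent in $(L,\preceq)$ is also order-convergent in $(K,\preceq)$. A short contrapositive then shows that if $V$ is open in $\tau_o(K,\preceq)$, the trace $V\cap L$ is open in $\tau_o(L,\preceq)$: a net in $L\setminus V$ order-converging in $L$ to a point of $V\cap L$ would also order-converge in $K$, violating the order-closedness of $K\setminus V$.

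For the reverse inclusion the plan is to take $C$ order-closed in $(L,\preceq)$ and show that $C$ itself is order-closed in $(K,\preceq)$, so that $C=C\cap L$ realizes it as a trace. Given a net $(x_\al)$ in $C\subseteq L$ order-convergent in $(K,\preceq)$ to some $x\in K$, the case $x\notin L$ is killed by combining Proposition~\ref{closed sets} with $K\sqsubseteq\Mcal_{sa}$ (which transports the order convergence into $\Mcal_{sa}$, where $L$ is order-closed). In the case $x\in L$ the plan is to upgrade the order convergence to $(L,\preceq)$ by replacing the bounding nets $y_\al\preceq x_\al\preceq z_\al$ in $K$ by bounding nets inside $L$ with the same supremum and infimum: for $L=\Mcal_+$ take $y_\al\vee 0$ and $z_\al\vee 0$; for $L=B_1(\Mcal_{sa})$ apply $f(t)=(t\vee -1)\wedge 1$; for $L=\Ecal(\Mcal)$ apply $f(t)=(t\vee 0)\wedge 1$. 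Monotonicity is preserved because continuous increasing functions respect the spectral order, the sandwich inequalities follow from the fact that $f$ acts as the identity on elements of $L$ (so $f(x_\al)=x_\al$ and $f(x)=x$), and the new suprema and infima remain $x$ thanks to Theorem~\ref{Vigier type theorem} (which identifies spectral-order suprema and infima of bounded monotone nets with strong operator limits), strong operator continuity of $f$ on norm-bounded sets from \cite[Proposition~5.3.2]{Ka97I}, and Lemma~\ref{bounded sets}.

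The main obstacle is the case $L=P(\Mcal)$, since continuous functional calculus does not preserve projections. Here I would replace $y_\al$ and $z_\al$ by the spectral projections $q_\al=\chi_{(1/2,\infty)}(y_\al)$ and $p_\al=\chi_{(1/2,\infty)}(z_\al)$. Monotonicity and the bracketing $q_\al\preceq x_\al\preceq p_\al$ fall out of the spectral-family definitions because $E^{x_\al}_{1/2}=\unit-x_\al$ for the projection $x_\al$. The identity $\sup q_\al=x$ follows directly from Proposition~\ref{Well known results}(iii) and $E^x_{1/2}=\unit-x$. The technical heart of the proof will be $\inf p_\al=x$: the plan is to sandwich $p_\al$ between $f(z_\al)$ and $g(z_\al)$ for continuous increasing functions with $f\leq\chi_{(1/2,\infty)}\leq g$ that agree with $\chi_{(1/2,\infty)}$ on $\{0,1\}$, use $z_\al\to x$ in the strong operator topology (Theorem~\ref{Vigier type theorem}) to deduce $f(z_\al),g(z_\al)\to f(x)=g(x)=x$, squeeze to obtain $p_\al\to x$ in the strong operator topology, and finally invoke Theorem~\ref{Vigier type theorem} once more on the decreasing projection net $(p_\al)$ to identify this strong operator limit with $\inf p_\al$.
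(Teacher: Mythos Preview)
Your argument is correct, but it takes a considerably more hands-on route than the paper. The paper's proof is a one-liner: it invokes \cite[Proposition~2.3]{ChHW15}, an abstract result guaranteeing $\tau_o(K,\preceq)|_L=\tau_o(L,\preceq)$ once one knows (a) that the posets involved are Dedekind complete, (b) that suprema and infima of subsets of $L$ computed in $K$ already lie in $L$ (Proposition~\ref{suprema and infima in various posets}), and (c) that $L$ is order-closed in $\Mcal_{sa}$ (Proposition~\ref{closed sets}). What you do is essentially re-prove that abstract proposition in this concrete setting: the inclusion $\tau_o(K,\preceq)|_L\subseteq\tau_o(L,\preceq)$ uses only the sublattice property, while for the reverse inclusion you manufacture, case by case, an order-preserving retraction $f$ onto $L$ (positive part, truncation, or---for $P(\Mcal)$---a spectral-projection map) and push the bounding nets through it, recovering the correct supremum and infimum via Theorem~\ref{Vigier type theorem} and strong operator continuity of $f$. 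The squeeze argument you outline for $\inf_\alpha p_\alpha=x$ in the projection case is sound: norm-boundedness of the sandwiched net lets you pass from the weak convergence forced by $f(z_\alpha)\leq p_\alpha\leq g(z_\alpha)$ to strong convergence, and then Theorem~\ref{Vigier type theorem} identifies the strong limit with the spectral-order infimum. Your approach is self-contained and makes the mechanism transparent (in particular it shows exactly where the analytic input of Theorem~\ref{Vigier type theorem} is used), at the cost of being substantially longer; the paper's approach is terse but hides all the work behind the cited reference.
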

	\begin{proof}
Since every conditionally complete lattice is a Dedekind complete poset, the corollary is immediately obtained by combining Proposition~\ref{suprema and infima in various posets}, Proposition~\ref{closed sets}, and \cite[Proposition~2.3]{ChHW15}.
	\end{proof}

%%%%%%%%%%%%%%%%%%%%%%%%%%%%%%%%%%%%%%%%%%%%%%%%%%%%%%%%%%%%%%%%%%%%%%%%
%%%%%%%%%%%%%%%%%%%%%%%%%%%%%%%%%%%%%%%%%%%%%%%%%%%%%%%%%%%%%%%%%%%%%%%%
\section*{Acknowledgement}
This work was supported by the project OPVVV Center for Advanced Applied Science CZ.02.1.01/0.0/0.0/16\_019/0000778 and  the ``Grant Agency of the Czech Republic" grant number 17-00941S, ``Topological and geometrical properties of Banach spaces and operator algebras II". 

%%%%%%%%%%%%%%%%%%%%%%%%%%%%%%%%%%%%%%%%%%%%%%%%%%%%%%%%%%%%%%%%%%%%%%%%
%%%%%%%%%%%%%%%%%%%%%%%%%%%%%%%%%%%%%%%%%%%%%%%%%%%%%%%%%%%%%%%%%%%%%%%%

\end{document}